\newtheorem{theorem}{Theorem}
\newtheorem{lemma}[theorem]{Lemma}
\newtheorem{remark}[theorem]{Remark}
\def\qed{\vbox{\hrule
 \hbox{\vrule\hbox to 5pt{\vbox to 8pt{\vfil}\hfil}\vrule}\hrule}}
\journal{Linear Algebra and Its Applications}
\begin{document}
\begin{frontmatter}

\title{An increasing sequence of lower bounds for the Estrada index of graphs and matrices}

\author[]{Juan R. Carmona\corref{cor1}}
\ead{juan.carmona@uach.cl}
\address{Instituto de Ciencias F\'isicas y Matem\'aticas,\\
  Universidad Austral de Chile, Independencia 631 - Valdivia - Chile}
\cortext[cor1]{Corresponding author}

\author{Jonnathan Rodr\'{\i}guez}
\ead{jrodriguez01@ucn.cl}
\address{Departamento de Matem\'{a}ticas, Universidad Cat\'{o}lica del Norte Av. Angamos 0610 Antofagasta, Chile, \\Departamento de Matem\'{a}ticas, Universidad de Antofagasta, Av Angamos 601, Antofagasta, Chile.}

\begin{abstract}
Let $G$ be a graph on $n$ vertices and $\lambda_1\geq \lambda_2\geq \ldots \geq \lambda_n$ its  eigenvalues. The Estrada index of
$G$ is defined as $EE(G)=\sum_{i=1}^n e^{\lambda_i}.$ In this work, we use an increasing sequence converging to the $\lambda_1$ to obtain an increasing sequence of lower bounds for $EE(G)$. In addition, we generalize this succession for the Estrada index of an arbitrary symmetric  nonnegative  matrix.
\end{abstract}

\begin{keyword}

Estrada Index; Adjacency matrix; Symmetric matrix; Lower bound; Graph.

\MSC 05C50 \sep 15A18

\end{keyword}

\end{frontmatter}
\section{Introduction}

In this paper, we consider undirected simple graphs $G$ with edge set denoted by $\mathcal{E}(G)$ and its vertex set $V(G)=\{ 1, \ldots, n \}$ with cardinality $m$ and $n$, respectively. 
	If $ e \in \mathcal{E}(G)$ has end vertices $i $ and $j$, then we say that $i$ and $j$ are adjacent and this edge is denoted by $ij$. For a finite set $U$, $|U|$ denotes its cardinality.
	Let $K_n$ be the complete graph with $n$ vertices and $\overline{K_n}$ its complement. A graph $G$ is bipartite if there exists a partitioning of $V(G)$ into disjoint, nonempty sets $V_1$ and $V_2$ such that the end vertices of each edge in $G$ are in distinct sets $V_1$, $V_2$. A graph $G$ is a complete bipartite graph if $G$ is bipartite and each vertex in $V_1$ is connected to all the vertices in $V_2$. If $|V_1|=p$ and $|V_2|=q$, the complete bipartite graph is denoted by $K_{p,q}$. For more properties of bipartite graphs, see \cite{haemers}.
	
	If $i \in V(G)$, then $NG(i)$ denoted the set of neighbors of the vertex $i$ in $G$, that is, $NG(i) = \{j \in V(G): ij \in \mathcal{E}(G)\}$.
	For the $i$-th vertex of $G$, the cardinality of $NG(i)$ is called the degree of $i$ and it is denoted by $d(i)$.\\
	The number of walks of length $k$ of G starting at $i$ is denoted by $d_k(i)$, is also called $k$-degree of the vertex $i$. Clearly, we define $d_0(i) = 1 ,d_1(i) = d(i)$ and {for $k \geq 1$} $$d_{k+1}(i)= \sum_{j \in NG(i)}d_k(j).$$
	
	 A graph $G$ is called $r$-regular if 
    $d(i) = r,$ for all $i \in V(G)$. Further, a graph $G$ is called $(a, b)$-semiregular
    if $\{d(i), d(j)\} = \{a, b\}$ holds for all edges $ij \in \mathcal{E}(G)$. A semiregular graph that is not regular will henceforth be called strictly semiregular. Clearly, a connected strictly semiregular graph must be bipartite. A graph $G$ is called harmonic \cite{D-G} (in \cite{Y-L-T} is called pseudoregular) if there exist a constant $\mu$ such that $d_2(i) = \mu d(i)$ holds for all $i \in V (G);$ in which case $G$ is also called $\mu$-pseudoregular.
A graph G is called semipseudoregular \cite{Y-L-T}, if there exist a constant $\mu$ such that $d_3(i) =\mu d(i)$ holds for all $i \in V (G);$ in which case $G$ is also called $\mu$-semiharmonic.
Thus every $\mu$-pseudoregular graph is $\mu^2$-semipseudoregular. Also every
$(a, b)$-semiregular graph is $ab$-semipseudoregular. A semipseudoregular graph that is not pseudoregular
will henceforth be called strictly semipseudoregular.

Finally, a graph $G$ is called $(a, b)$-pseudosemiregular if $\displaystyle \left\{ \frac{d_2(i)}{d(i)}, \frac{d_2(j)}{d(j)}\right\}=\{a,b\}$ holds for all edges $ij \in \mathcal{E}(G)$. A pseudosemiregular graph that is not pseudoregular will henceforth be called strictly pseudosemiregular. Clearly, a connected strictly pseudosemiregular graph must be bipartite.\\

The  adjacency  matrix $A(G)$ of  the graph $G$ is  a 
symmetric matrix of order $n$ with entries $a_{ij}$, such that $a_{ij}=1$ if $ij \in \mathcal{E}(G)$ and $a_{ij}=0$ otherwise. Denoted by $\lambda_1\geq \ldots \geq \lambda_n$ to the eigenvalues of $A(G)$,  
see \cite{C-D-S1, C-D-S2}.
  
The Estrada index of the graph $G$ is defined as
	$$EE(G)= \sum_{i=1}^n e^{\lambda_i}.$$
	This spectral quantity is put forward by E. Estrada \cite{E1} in the year 2000. There have been found a lot of chemical and physical applications, including quantifying the degree of folding of long-chain proteins, \cite{E1, E2, E3, G-F-G-M-V, G-R-F-M-S, G-G-M-S}, 
	complex networks and evolving graphs \cite{E4, E5, S1, S2, S3, S4, Shang}. 
	Mathematical properties of this invariant can be found in e.g. \cite{F-A-G, G-D-R, G-R, K, S5, Z, Z-Z}.

Denote by $M_k = M_k(G)$ to the $k$-th spectral moment of the graph $G$, i.e.,
$$M_k= \displaystyle
4
 \sum_{i=1}^n(\lambda_i)^k.$$ In \cite{C-D-S1}, for a graph $G$ with $n$ vertices and $m$ edges, the authors proved that
\begin{equation}\label{eq1}
M_0 =n,\,\, M_1 =0,\,\, M_2 =2m,\,\, M_3 =6t,
\end{equation}
where $t$ is the number of triangles in $G.$ Thereby we can write the Estrada index as
$$EE(G) = \sum_{k=0}^{\infty}\frac{M_k}{k!}.$$

\section{Preliminaries}

Let $R$ be a symmetric matrix of order $\ell$ with eigenvalues $\rho_1, \rho_2,\ldots , \rho_{\ell}$. 
The Estrada index of $R$ is denoted and defined as 
\begin{equation}\label{eer}
    EE(R)=\sum_{i=1}^{\ell}e^{\rho_i}.
\end{equation}

For more details on the theory of the Estrada index see the papers \cite{J-J-J,D-S-E-K-F-N,R} and the references cited therein.
\\

Among the results obtained for the Estrada index of symmetric matrices, we outline the following lower bound obtained in \cite{J-J-J}
\begin{eqnarray}
\label{JJJ2}
	EE(R)\geq e^{\rho_1}+(\ell-1)+Tr(R)-\rho_1.
\end{eqnarray}
For the other hand, considering $R$ as the adjacency matrix of a graph $G$, in the same paper, the authors obtain the following lower bound for the Estrada index of a graph $G$

\begin{eqnarray}
\label{JJJ}
	EE(G)\geq e^{\left( \frac{2m}{n}\right)}+n-1-\frac{2m}{n}.
\end{eqnarray}\
The equality holds in (\ref{JJJ}) if and only if $G$ is isomorphic  to $\overline{K_n}$.
To obtain the last lower bound, at first, the authors showed that the following relationship holds:
\begin{equation}\label{1}
	EE(G)\geq e^{\lambda_1}+n-1-\lambda_1.
\end{equation}

In \cite{C-D-S1}, the authors obtain a sequence of lower bounds for Estrada index of graphs. For this, we need the following results.
\begin{theorem}{\cite{C-D-S1}}
Let $G$ be a connected graph. Then
\begin{equation}\label{3}
\lambda_1 \geq \sqrt{\displaystyle \frac{\displaystyle \sum_{i \in V(G)} d^2(i)}{n}}
\end{equation}
with equality if and only if $G$ is regular or a semiregular.

\end{theorem}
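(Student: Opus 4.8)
The plan is to obtain the inequality from the Rayleigh--Ritz variational characterization of the largest eigenvalue of the symmetric matrix $A=A(G)$, evaluated at the all-ones vector. Since $A$ is symmetric and nonnegative, $\lambda_1=\rho(A)\ge 0$ and $\lambda_1^2=\rho(A)^2=\rho(A^2)$ is the largest eigenvalue of $A^2$; hence, for every nonzero $x\in\mathbb{R}^n$,
\[
\lambda_1^2 \;\ge\; \frac{x^{T}A^{2}x}{x^{T}x} \;=\; \frac{\|Ax\|^{2}}{\|x\|^{2}} .
\]
Taking $x=\mathbf{1}=(1,\dots,1)^{T}$ and noting that $A\mathbf{1}$ is exactly the degree vector $(d(1),\dots,d(n))^{T}$, so $\|A\mathbf{1}\|^{2}=\sum_{i\in V(G)}d^{2}(i)$ and $\|\mathbf{1}\|^{2}=n$, this yields $\lambda_1^{2}\ge\frac1n\sum_{i\in V(G)}d^{2}(i)$, which is the asserted bound (the case $\mathcal{E}(G)=\varnothing$, i.e.\ $n=1$, being trivial).

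For the equality discussion, equality holds above precisely when $\mathbf{1}$ is an eigenvector of $A^{2}$ for its largest eigenvalue $\lambda_1^{2}$, i.e.\ $A^{2}\mathbf{1}=\lambda_1^{2}\mathbf{1}$. Reading this coordinatewise, $(A^{2}\mathbf{1})_i=\sum_{j\in NG(i)}d(j)=d_2(i)$, so equality is equivalent to $d_2(i)=\lambda_1^{2}$ for all $i\in V(G)$. From this reformulation the converse is immediate: if $G$ is $r$-regular then $A\mathbf{1}=r\mathbf{1}$ forces $r=\lambda_1$ and $d_2(i)=r^{2}=\lambda_1^{2}$; if $G$ is $(a,b)$-semiregular then a one-line count gives $d_2(i)=ab$ for every $i$, so $ab$ is an eigenvalue of $A^{2}$ with the positive eigenvector $\mathbf{1}$, whence $ab=\lambda_1^{2}$. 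In either situation $\sum_{i}d^{2}(i)=\mathbf{1}^{T}A^{2}\mathbf{1}=\lambda_1^{2}n$ and equality holds.

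For the forward direction I would invoke Perron--Frobenius, separating two cases according to the bipartiteness of the connected graph $G$ (for $n\ge 2$, $A$ is irreducible). If $G$ is not bipartite, then $A$ is primitive, hence $A^{2}$ is irreducible; its largest eigenvalue $\lambda_1^{2}$ is simple with a positive eigenvector, so the positive solution $\mathbf{1}$ of $A^{2}\mathbf{1}=\lambda_1^{2}\mathbf{1}$ is a multiple of that Perron vector, which is also the Perron vector of $A$. Therefore $A\mathbf{1}=\lambda_1\mathbf{1}$, i.e.\ $d(i)=\lambda_1$ for all $i$ and $G$ is regular. If $G$ is bipartite with parts $V_1,V_2$, write $A=\left(\begin{smallmatrix}0 & B\\ B^{T} & 0\end{smallmatrix}\right)$, so $A^{2}$ is block-diagonal with diagonal blocks $BB^{T}$ and $B^{T}B$, both irreducible since $G$ is connected, both of spectral radius $\lambda_1^{2}$. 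Splitting $\mathbf{1}=(\mathbf{1}_{V_1},\mathbf{1}_{V_2})$ accordingly, the eigen-equation gives $BB^{T}\mathbf{1}_{V_1}=\lambda_1^{2}\mathbf{1}_{V_1}$ with $\mathbf{1}_{V_1}>0$, so $\mathbf{1}_{V_1}$ is the Perron vector of $BB^{T}$; then $B^{T}\mathbf{1}_{V_1}$ is a nonnegative eigenvector of $B^{T}B$ for $\lambda_1^{2}$, hence a positive multiple of $\mathbf{1}_{V_2}$, and since $(B^{T}\mathbf{1}_{V_1})_j=d(j)$ for $j\in V_2$ we get that $d$ is constant on $V_2$; symmetrically it is constant on $V_1$. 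Thus $G$ is biregular, that is, regular or strictly semiregular.

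I expect the main obstacle to be the equality analysis in the bipartite case: since $A^{2}$ fails to be irreducible exactly when $G$ is bipartite, one cannot argue directly that $\mathbf{1}$ is the essentially unique Perron vector of $A^{2}$, and the remedy --- passing to the irreducible blocks $BB^{T},B^{T}B$ and recovering the degree sequence from $B^{T}\mathbf{1}_{V_1}$ and $B\mathbf{1}_{V_2}$ --- is the one place requiring some care. The inequality itself, the coordinatewise translation, and the converse are all routine.
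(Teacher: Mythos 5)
Your proof is correct; the paper itself states this result only as a citation to Cvetkovi\'c--Doob--Sachs and gives no proof, and your argument (Rayleigh quotient of $A^2$ at the all-ones vector, equality read off as $A^2\mathbf{1}=\lambda_1^2\mathbf{1}$, then Perron--Frobenius applied to $A^2$ in the non-bipartite case and to the blocks $BB^{T}$, $B^{T}B$ in the bipartite case) is exactly the standard proof from that literature. No gaps: the one delicate point you flagged, the loss of irreducibility of $A^2$ for bipartite $G$, is handled correctly by passing to the irreducible diagonal blocks.
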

\begin{theorem}{\cite{Y-L-T}}
Let $G$ be a connected graph. Then
\begin{equation}\label{3}
\lambda_1 \geq \sqrt{ \frac{\displaystyle \sum_{i \in V(G)} d_2^2(i)}{\displaystyle \sum_{i \in V(G)} d^2(i)}}
\end{equation}
with equality if and only if $G$ is a pseudo-regular graph or a strictly pseudo-semiregular graph. 

\end{theorem}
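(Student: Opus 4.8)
The idea is to obtain the inequality from a single Rayleigh-quotient estimate for $A(G)^{2}$ evaluated at the degree vector, and then to read the equality case off the eigenvectors that tightness forces.

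\smallskip

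\noindent\textit{The inequality.} Since $G$ is connected, $A(G)$ is irreducible and nonnegative, so by Perron--Frobenius $\lambda_1\ge|\lambda_i|$ for every $i$; hence $\lambda_1^{2}$ is the largest eigenvalue of $A(G)^{2}$ and $\lambda_1^{2}\ge x^{T}A(G)^{2}x/(x^{T}x)$ for all nonzero $x\in\mathbb{R}^{n}$. I would take $x=d:=(d(1),\dots,d(n))^{T}$. Because $(A(G)d)_i=\sum_{j\in NG(i)}d(j)=d_2(i)$, the numerator equals $d^{T}A(G)^{2}d=\|A(G)d\|^{2}=\sum_i d_2^{2}(i)$ and the denominator is $\sum_i d^{2}(i)>0$. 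As $\lambda_1\ge 0$, taking square roots gives the claim.

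\smallskip

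\noindent\textit{The equality case.} Equality holds exactly when $d$ is a $\lambda_1^{2}$-eigenvector of $A(G)^{2}$, i.e.\ $d_3(i)=\lambda_1^{2}d(i)$ for all $i$. Writing $d_2=A(G)d$ we then have $A(G)d_2=d_3=\lambda_1^{2}d$, so $\operatorname{span}\{d,d_2\}$ is $A(G)$-invariant and on it $A(G)$ has eigenvalues $\pm\lambda_1$, with $\lambda_1 d+d_2>0$ a $\lambda_1$-eigenvector and $d_2-\lambda_1 d$ a $(-\lambda_1)$-eigenvector. If $d_2-\lambda_1 d=0$ then $d_2=\lambda_1 d$ and $G$ is pseudoregular. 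Otherwise $-\lambda_1$ is an eigenvalue of $A(G)$, so (as $G$ is connected) $G$ is bipartite, say with parts $V_1,V_2$; the $(-\lambda_1)$-eigenspace is one-dimensional, spanned by the Perron vector of $A(G)$ with its sign reversed on $V_2$, so $d_2-\lambda_1 d$ is a nonzero scalar multiple of it. Solving for $d_2(i)/d(i)$ separately on $V_1$ and $V_2$ yields two constants $a,b$, distinct precisely because $d_2-\lambda_1 d\ne 0$ and $d>0$; since every edge joins $V_1$ to $V_2$ this is exactly the condition that $G$ is $(a,b)$-pseudosemiregular with $a\ne b$, i.e.\ strictly pseudosemiregular. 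For the converse, if $G$ is $\mu$-pseudoregular then $A(G)d=\mu d$ forces $\mu=\lambda_1$ and $\sum_i d_2^{2}(i)/\sum_i d^{2}(i)=\mu^{2}=\lambda_1^{2}$; and if $G$ is $(a,b)$-pseudosemiregular with $a\ne b$ (hence connected bipartite), a short computation of $d_3$ across the bipartition gives $d_3=ab\,d$, so $d>0$ is an eigenvector of the irreducible diagonal blocks of $A(G)^{2}$, whence $ab=\lambda_1^{2}$ and equality holds.

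\smallskip

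\noindent\textit{Where the difficulty lies.} The inequality is essentially free; the real content is the equality analysis. The obstacle is that tightness, taken at face value, only says $d_3=\lambda_1^{2}d$ --- that $G$ is semiharmonic with parameter $\lambda_1^{2}$ --- and one must upgrade this to the sharp dichotomy ``pseudoregular or strictly pseudosemiregular.'' The two-dimensional $A(G)$-invariant subspace $\operatorname{span}\{d,d_2\}$ together with the one-dimensionality of the $(-\lambda_1)$-eigenspace of a connected bipartite graph is exactly the tool for this; one also has to treat the degenerate sub-case $\dim\operatorname{span}\{d,d_2\}=1$, which simply collapses into the pseudoregular branch.
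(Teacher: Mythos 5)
This statement is quoted in the paper as a known result from the reference [Y-L-T] (Yu, Lu, Tian); the paper gives no proof of it, so there is nothing internal to compare against. Judged on its own, your argument is correct and is essentially the standard proof of this theorem: the inequality is the Rayleigh quotient of $A(G)^2$ at the degree vector together with $\lambda_1=\max_i|\lambda_i|$ from Perron--Frobenius, and the equality analysis correctly upgrades the tightness condition $d_3=\lambda_1^2 d$ to the stated dichotomy via the $A(G)$-invariant subspace $\operatorname{span}\{d,d_2\}$.

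One step is compressed and worth spelling out: from $d_2-\lambda_1 d$ being proportional to the signed Perron vector alone you cannot yet conclude that $d_2(i)/d(i)$ is constant on each part, because that ratio equals $\lambda_1+c\,u_i/d(i)$ and the constancy of $u_i/d(i)$ on each part is not free. You need to use \emph{both} identities you established: $\lambda_1 d+d_2=\alpha u$ (a positive $\lambda_1$-eigenvector of an irreducible nonnegative matrix must be a multiple of the Perron vector $u$) and $d_2-\lambda_1 d=c\tilde u$ (with $\tilde u$ the sign-flipped Perron vector). Adding and subtracting these on each part gives $2d_2(i)=(\alpha\pm c)u_i$ and $2\lambda_1 d(i)=(\alpha\mp c)u_i$, whence $d_2(i)/d(i)$ equals $\lambda_1(\alpha+c)/(\alpha-c)$ on one part and $\lambda_1(\alpha-c)/(\alpha+c)$ on the other; these are distinct exactly when $c\neq0$ and their product is $\lambda_1^2$. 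With that sentence added, the proof is complete, including the converse directions, which you handle correctly.
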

Furthermore, we have to $$\sqrt{ \frac{\displaystyle \sum_{i \in V(G)} d_2^2(i)}{\displaystyle \sum_{i \in V(G)} d^2(i)}}\geq \sqrt{ \frac{\displaystyle \sum_{i \in V(G)} d^2(i)}{n}}.$$
In \cite{H-T-W}, the authors generalized these results and built an increasing sequence of lower bounds for $\lambda_1$, as follows: 
\begin{equation}\label{seq1}
  \begin{array}{cc}
        \gamma^{(0)}=\sqrt{ \displaystyle\frac{ \displaystyle \sum_{i \in V(G)} d^2(i)}{n}} \\
            \\
        \gamma^{(1)}=\sqrt{ \frac{\displaystyle \sum_{i \in V(G)} d_2^2(i)}{\displaystyle \sum_{i \in V(G)} d^2(i)}} \\
        \vdots \\
        \gamma^{(k)}=\sqrt{ \frac{\displaystyle \sum_{i \in V(G)} d_{k+1}^2(i)}{\displaystyle \sum_{i \in V(G)} d^2_{k}(i)}}.
  \end{array}  
\end{equation}

Thereby, they obtain the following results.
\begin{theorem}{\cite{H-T-W}}\label{Lem}
Let $G$ be a connected graph and $k \geq 1$. Then
$$ \lambda_1 \geq \gamma^{(k)} $$
with equality if and only if $G$ is pseudoregular or strictly pseudosemiregular.
\end{theorem}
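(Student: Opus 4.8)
The plan is to establish the chain $\gamma^{(k)}\le\lambda_1$ by a Rayleigh-quotient argument applied to a cleverly chosen test vector, exactly as in the prototype inequalities \eqref{3}. Recall that $d_{k+1}(i)=\sum_{j\in NG(i)}d_k(j)$ means precisely that $d_{k+1}=A\,d_k$, where $d_k=(d_k(1),\dots,d_k(n))^{\top}$ and $A=A(G)$. Hence $d_k = A^{k-1}d_1 = A^{k-1}\mathbf{d}$ where $\mathbf d$ is the degree vector, and more generally $d_{k+j}=A^j d_k$. The key identity to exploit is
\[
\gamma^{(k)}=\sqrt{\frac{\|d_{k+1}\|^2}{\|d_k\|^2}}=\sqrt{\frac{d_k^{\top}A^2 d_k}{d_k^{\top}d_k}}.
\]
First I would take $x=d_k$ as a test vector in the Rayleigh quotient for $A^2$: since $A^2$ is symmetric with largest eigenvalue $\lambda_1^2$ (using $\lambda_1\ge|\lambda_n|$ for a connected graph, which holds because $\lambda_1$ is the Perron root), we get $d_k^{\top}A^2 d_k\le \lambda_1^2\, d_k^{\top}d_k$, which is exactly $\gamma^{(k)}\le\lambda_1$. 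This is the short route, and it already uses only that $G$ is connected (so $A$ is irreducible and $\lambda_1=\lambda_1(A)\ge -\lambda_n$, with the degree vector having positive entries so all $d_k$ are nonzero).

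For the equality analysis, equality in the Rayleigh bound $d_k^{\top}A^2 d_k=\lambda_1^2 d_k^{\top}d_k$ forces $d_k$ to lie in the eigenspace of $A^2$ associated with $\lambda_1^2$, i.e. $A^2 d_k=\lambda_1^2 d_k$. Decomposing $d_k = u + w$ where $Au=\lambda_1 u$ (Perron component, $u>0$) and $Aw=-\lambda_1 w$, one argues as in the bipartite spectral theory: if $w\neq 0$ then $G$ is bipartite and $w$ is (up to scaling) the $\pm$ sign vector of the bipartition times the Perron vector, and unwinding $d_k=A^{k-1}\mathbf d$ together with $A\mathbf d = d_2$ etc. pins down the degree/$d_2$ ratios, yielding that $G$ is strictly pseudosemiregular; if $w=0$ then $A\,d_k=\lambda_1 d_k$ with $d_k>0$, so $d_k$ is the Perron vector, and since $d_k=A^{k-1}\mathbf d$ one propagates back to get $A\mathbf d=\lambda_1\mathbf d$, i.e. $d_2(i)=\lambda_1 d(i)$ for all $i$, which is exactly pseudoregularity with $\mu=\lambda_1$. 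Conversely, if $G$ is pseudoregular then $\mathbf d$ is a Perron eigenvector and $d_{k+1}=\lambda_1 d_k$ for every $k$, giving $\gamma^{(k)}=\lambda_1$; the strictly pseudosemiregular case is handled by the analogous computation on each side of the bipartition.

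The main obstacle I anticipate is not the inequality — which is a one-line Rayleigh quotient estimate — but the bookkeeping in the equality case: one must carefully relate the condition $A^2 d_k=\lambda_1^2 d_k$ back through the recursion $d_{j+1}=A d_j$ to a structural condition on $d=d_1$ and $d_2=d_1$ applied once, and show this is equivalent to the graph being pseudoregular or strictly pseudosemiregular uniformly in $k\ge 1$ rather than just for the particular $k$ at hand. A clean way to package this is to observe that $\|d_{k+1}\|^2/\|d_k\|^2$ is a monotone (nondecreasing) sequence in $k$ converging to $\lambda_1^2$ whenever $\mathbf d$ has nonzero Perron component — which it always does — so equality at any single $k$ forces equality for all larger $k$ and, via the self-adjointness of $A^2$ and a backward induction using that $A$ restricted to the relevant invariant subspace is invertible on the support of the Perron vector, equality for all $k\ge 1$; then invoking the $k=1$ case recovers precisely the hypothesis of \eqref{3} from Theorem stated just above, namely that $G$ is pseudoregular or strictly pseudosemiregular.
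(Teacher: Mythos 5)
The paper offers no proof of Theorem \ref{Lem} at all: it is imported verbatim from the reference [H-T-W], so there is nothing in the text to compare your argument against. Your Rayleigh--quotient argument is essentially the standard proof of this result, and the inequality part is fully correct: $d_{k+1}=A\,d_k$ gives $(\gamma^{(k)})^2=d_k^{\top}A^2d_k/d_k^{\top}d_k\le\lambda_{\max}(A^2)=\lambda_1^2$, using that $\lambda_1\ge|\lambda_n|$ for a nonnegative matrix and that $d_k\neq0$ for a connected graph.

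The one step that is not yet a proof --- and which you correctly single out as the main obstacle --- is the backward propagation in the equality analysis: from ``$d_k=A^{k-1}\mathbf d$ lies in the $\lambda_1^2$-eigenspace of $A^2$'' one cannot, for an arbitrary vector, conclude the same for $\mathbf d$, because $A^{k-1}$ may have a nontrivial kernel. The clean repair, which your closing remark about invertibility on the relevant invariant subspace gestures at, is this: expand $\mathbf e=\sum_i c_ix_i$ in an orthonormal eigenbasis, so that $d_k=A^k\mathbf e=\sum_i c_i\lambda_i^kx_i$. Equality $\gamma^{(k)}=\lambda_1$ holds iff $c_i\lambda_i^k=0$ for every $i$ with $\lambda_i^2\neq\lambda_1^2$, and for every $k\ge1$ this is equivalent to the single condition $c_i\lambda_i=0$, i.e.\ to $\mathbf d=A\mathbf e$ lying in the span of the eigenvectors for $\lambda_1$ and $-\lambda_1$. (Equivalently: $\mathbf d$ is in the range of $A$, hence orthogonal to $\ker A$, so the zero eigenvalues cause no loss of injectivity.) This also shows at once that the equality condition is the same for all $k\ge1$, which is what makes the characterization uniform in $k$. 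With that in place, your case split is sound: $w=0$ gives $A\mathbf d=\lambda_1\mathbf d$ with $\mathbf d>0$, i.e.\ pseudoregularity with $\mu=\lambda_1$; $w\neq0$ forces $G$ bipartite, and writing $\mathbf d=\alpha x_1+\beta\tilde x_1$ (with $\tilde x_1$ the sign-flipped Perron vector) yields $d_2(i)/d(i)=\lambda_1(\alpha-\beta)/(\alpha+\beta)$ on one side and $\lambda_1(\alpha+\beta)/(\alpha-\beta)$ on the other, two distinct constants, i.e.\ strict pseudosemiregularity. The converse for the strictly pseudosemiregular case also deserves one line more than you give it: there $A^2\mathbf d=ab\,\mathbf d$ with $\mathbf d>0$, so $ab=\lambda_1^2$ by Perron--Frobenius, whence $\gamma^{(k)}\gamma^{(k+1)}=\lambda_1^2$ and, since each factor is at most $\lambda_1$, both equal $\lambda_1$. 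In short: right approach, correct modulo making the kernel argument explicit.
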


\begin{theorem}{\cite{H-T-W}}\label{T1}
Let G be a graph, then $\{\gamma^{(k)}\}_{k\geq 0}$ is an increasing sequence and $$\displaystyle \lim_{k\rightarrow \infty}\gamma^{(k)}=\lambda_1.$$
\end{theorem}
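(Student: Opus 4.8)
The plan is to rewrite the whole sequence in spectral terms. Since the number of walks of length $k$ starting at vertex $i$ equals $(A^{k}\mathbf{1})_i$, where $A=A(G)$ and $\mathbf{1}$ is the all-ones vector, the symmetry of $A$ gives
\begin{equation*}
\sum_{i\in V(G)} d_k^2(i) \;=\; \|A^{k}\mathbf{1}\|^{2} \;=\; \mathbf{1}^{\top}A^{2k}\mathbf{1} \;=:\; m_k .
\end{equation*}
Because $d_0(i)=1$ forces $n=\sum_i d_0^2(i)$, every term of the sequence has the uniform shape $\gamma^{(k)}=\sqrt{m_{k+1}/m_k}$ for $k\ge 0$. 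Expanding $\mathbf{1}=\sum_j c_j v_j$ in an orthonormal eigenbasis of $A$ (so $c_j=v_j^{\top}\mathbf{1}$) yields $m_k=\sum_{j=1}^{n} c_j^{2}\lambda_j^{2k}$, i.e. the $m_k$ are the even moments of the nonnegative atomic measure placing mass $c_j^{2}$ at $\lambda_j$. One checks $m_k>0$ for all $k$ whenever $G$ has an edge (otherwise the $\gamma^{(k)}$ are undefined), so the ratios are legitimate.

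Monotonicity is then a one-line Cauchy--Schwarz estimate applied to the splitting $c_j^{2}\lambda_j^{2k}=\sqrt{c_j^{2}\lambda_j^{2(k-1)}}\,\sqrt{c_j^{2}\lambda_j^{2(k+1)}}$, which gives $m_k^{2}\le m_{k-1}\,m_{k+1}$ for $k\ge 1$, hence $m_{k+1}/m_k\ge m_k/m_{k-1}$ and, taking square roots, $\gamma^{(k)}\ge\gamma^{(k-1)}$. Thus $\{\gamma^{(k)}\}_{k\ge 0}$ is nondecreasing.

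For the limit, put $\Lambda=\max\{\lambda_j^{2}:c_j\neq 0\}$ and factor $\Lambda^{k}$ out of the numerator and denominator of $(\gamma^{(k)})^{2}=\sum_j c_j^{2}\lambda_j^{2k+2}\big/\sum_j c_j^{2}\lambda_j^{2k}$; every term with $\lambda_j^{2}<\Lambda$ vanishes in the limit, leaving $(\gamma^{(k)})^{2}\to\Lambda$. It remains to show $\Lambda=\lambda_1^{2}$. Since $\lambda_1$ is the spectral radius of the nonnegative matrix $A$ we have $\lambda_j^{2}\le\lambda_1^{2}$ for every $j$, so $\Lambda\le\lambda_1^{2}$; for the reverse inequality we need $c_j\neq 0$ for some $j$ with $\lambda_j=\lambda_1$. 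This is the one point where positivity is essential: for $G$ connected the Perron eigenvector $v_1$ may be chosen strictly positive, whence $c_1=v_1^{\top}\mathbf{1}>0$ (in general one argues with a connected component realizing the spectral radius, whose Perron vector extended by zeros is a $\lambda_1$-eigenvector not orthogonal to $\mathbf{1}$). Therefore $\Lambda=\lambda_1^{2}$, and since $\gamma^{(k)}\ge 0$ we conclude $\gamma^{(k)}\to\lambda_1$.

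The only genuinely delicate step is this last one: monotonicity and convergence of the ratios $m_{k+1}/m_k$ hold for any nonnegative measure, but identifying the limit with $\lambda_1$ — rather than with $\max_j|\lambda_j|$ over the support of the $c_j$, which for bipartite graphs coincides with $\lambda_1$ in any case — forces the use of Perron--Frobenius positivity of the principal eigenvector and explains the connectivity hypothesis appearing in the companion statements.
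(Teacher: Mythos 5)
Your proof is correct. Note that the paper does not prove Theorem \ref{T1} at all --- it is quoted from \cite{H-T-W} without proof --- so there is nothing to compare line by line; but your argument is the standard one and is exactly the moment/power-iteration reasoning that underlies the paper's matrix analogue (Lemma \ref{crec}, where $\xi^{(k)}=\|R^{k+1}f\|/\|R^{k}f\|$ plays the role of $\gamma^{(k)}$ with $f=\mathbf{1}$). The three ingredients are all in order: the identification $\sum_i d_k^2(i)=\mathbf{1}^{\top}A^{2k}\mathbf{1}=\sum_j c_j^2\lambda_j^{2k}$, the Cauchy--Schwarz log-convexity $m_k^2\le m_{k-1}m_{k+1}$ giving monotonicity of the ratios, and the Perron--Frobenius positivity of $v_1$ guaranteeing $c_1\neq 0$ so that the limit of the ratios is $\lambda_1^2$ rather than some smaller $|\lambda_j|^2$. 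You are also right to flag that connectivity (or at least the presence of an edge in a component attaining the spectral radius) is what makes the last step work, and that the empty graph must be excluded for the ratios to be defined --- a hypothesis the paper's statement of Theorem \ref{T1} omits but which is implicit from Theorem \ref{Lem}.
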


In this work, following the ideas of \cite{C-D-S1,C-G-T-R}, we will use the increasing sequence of lower bounds for $\lambda_1$ given in \cite{H-T-W} to obtain an increasing sequence of lower bounds for the Estrada index of graphs, which converges to the lower bound (\ref{1}). Moreover, applying this technique, we obtain an increasing sequence of lower bounds for the Estrada index of a symmetric matrix $R$, which converges to the lower bound (\ref{JJJ2}).

\section{Main Results}

\noindent Consider the following function 
\begin{equation}\label{f}
f(x)= (x-1)-\ln(x), \, \, x>0.
\end{equation}
Clearly the function $f$ is decreasing in $(0,1]$ and increasing in $[1,+\infty)$
Consequently, $f(x) \geq f(1) = 0$, implying that 
\begin{equation}\label{eq7}
x \geq 1 + \ln x, \,\,\,x > 0,
\end{equation}
the equality holds if and only if $x = 1$. Let $G$ be a graph of order $n$, using (\ref{eq1}) and (\ref{eq7}), we get:
\begin{equation}\label{Energia0}
\begin{array}{lllll}
EE(G) & \geq & e^{\lambda_1} + (n-1)+ \displaystyle \sum_{k=2}^n \ln e^{\lambda_k}  \\
& = & e^{\lambda_1} + (n -1) +\displaystyle \sum_{k=2}^n \lambda_k \\
& = &  e^{\lambda_1} + (n -1) +M_1 - \lambda_1\\
		
& = &  e^{\lambda_1} + (n -1)  - \lambda_1.
\end{array}
\end{equation}
Define the function
\begin{equation}\label{fi}
\phi(x)= e^x + (n-1) - x,\qquad x>0.
\end{equation}
		
\noindent Note that, this is an increasing function on $D_{\phi}=[0,+\infty)$.

Therefore, we proved the following result.

\begin{theorem}\label{Tg}
Let $G$ a connected graph of order $n$ and the sequence $\{\gamma^{(k)}\}_{k=0}^{\infty}$ as in (\ref{seq1}). Then the sequence $\{\phi(\gamma^{(k)})\}_{k=0}^{\infty}$ is increasing and converges to $\phi (\lambda_1),$ 
moreover for all $ k\geq 0$ 
		\begin{equation}\label{cota0}
		EE(G)> e^{\left( \gamma^{(k)}\right)}+n-1-\gamma^{(k)}.
		\end{equation}
\end{theorem}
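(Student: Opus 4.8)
The plan is to combine three ingredients already in place: the pointwise inequality $EE(G)\ge \phi(\lambda_1)$ derived in display~(\ref{Energia0}), the monotonicity of $\phi$ on $[0,+\infty)$, and the properties of the sequence $\{\gamma^{(k)}\}$ supplied by Theorems~\ref{Lem} and~\ref{T1}. Concretely, since $\gamma^{(k)}\le\lambda_1$ for every $k\ge 1$ (Theorem~\ref{Lem}), and $\gamma^{(0)}\le\lambda_1$ as well (this is inequality~(\ref{3})), applying the increasing function $\phi$ yields $\phi(\gamma^{(k)})\le\phi(\lambda_1)\le EE(G)$ for all $k\ge 0$, which is the bound~(\ref{cota0}) in non-strict form. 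Because $\{\gamma^{(k)}\}$ is increasing (Theorem~\ref{T1}) and $\phi$ is increasing, the composed sequence $\{\phi(\gamma^{(k)})\}$ is increasing; and since $\gamma^{(k)}\to\lambda_1$ with $\phi$ continuous, $\phi(\gamma^{(k)})\to\phi(\lambda_1)$. That settles the convergence and monotonicity claims.

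The one genuine subtlety is the \emph{strict} inequality asserted in~(\ref{cota0}): the chain above only gives $\ge$. I would handle this by examining where equality can occur in the two links $\phi(\gamma^{(k)})\le\phi(\lambda_1)$ and $\phi(\lambda_1)\le EE(G)$. The first is an equality iff $\gamma^{(k)}=\lambda_1$, which by Theorem~\ref{Lem} forces $G$ to be pseudoregular or strictly pseudosemiregular. The second, coming from~(\ref{Energia0}), which in turn rests on the elementary inequality~(\ref{eq7}) applied to each $e^{\lambda_k}$ with $k\ge 2$, is an equality iff $e^{\lambda_k}=1$, i.e.\ $\lambda_k=0$, for all $k=2,\dots,n$. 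For a connected graph on $n\ge 2$ vertices this cannot happen: a connected graph has $\lambda_n<0$ (indeed $\lambda_1>0>\lambda_n$ unless $G$ has no edges, and a connected graph on $\ge 2$ vertices has an edge), so $\sum_{k=2}^n e^{\lambda_k}>n-1$ strictly. Hence the second link is always strict for a connected graph of order $n\ge 2$, and therefore $EE(G)>\phi(\lambda_1)\ge\phi(\gamma^{(k)})$, giving the strict inequality for every $k\ge 0$.

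So the write-up would proceed: (i) recall~(\ref{Energia0}) and note it is strict for connected $G$ because $\lambda_n<0$; (ii) invoke $\gamma^{(k)}\le\lambda_1$ for all $k\ge 0$ and monotonicity of $\phi$ to get $EE(G)>\phi(\gamma^{(k)})$; (iii) invoke Theorem~\ref{T1} plus monotonicity and continuity of $\phi$ to conclude $\{\phi(\gamma^{(k)})\}$ is increasing with limit $\phi(\lambda_1)$. The main obstacle — really the only place needing care — is justifying strictness in step~(i); everything else is a direct application of the quoted theorems. A secondary point worth a sentence is that $\phi$ is only asserted increasing on $[0,+\infty)$, so one should note $\gamma^{(k)}\ge 0$ (immediate, as it is defined via a square root of a ratio of sums of squares of nonnegative integers, and is positive for a connected graph with an edge), ensuring all arguments stay in the domain where monotonicity of $\phi$ applies.
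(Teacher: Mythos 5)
Your proposal is correct and follows essentially the same route as the paper: compose the increasing function $\phi$ with the increasing sequence $\{\gamma^{(k)}\}$, which lies in $D_\phi$ and converges to $\lambda_1$ by Theorem \ref{T1}, and combine this with the bound $EE(G)\ge \phi(\lambda_1)$ from (\ref{Energia0}). You are in fact more careful than the paper on the one delicate point, the strict inequality in (\ref{cota0}), which the paper's proof leaves implicit (its subsequent Remark only observes that equality would force $G\cong\overline{K_n}$, contradicting connectedness for $n\ge 2$); your argument that $\lambda_n<0$ for a connected graph with an edge, so that (\ref{eq7}) is strict at $x=e^{\lambda_n}$ and hence $EE(G)>\phi(\lambda_1)$, supplies exactly the missing justification.
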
		

	\begin{proof}
	First, we observe that $\gamma^{(k)} \in D_{\phi}$, for all $ k\geq 0.$ 
This is an immediate consequence of Theorem \ref{T1} and that $\gamma^{(0)}=\displaystyle \sqrt{\frac{\sum_{i \in V(G)} d^2(i)}{n}}  \geq \sqrt{\frac{2m}{n}} \geq 1.$ Since  $\{\gamma^{(k)}\}_{k=0}^{\infty}$ is an increasing sequence and by Theorem \ref{T1} converges to $\lambda_1.$ Then, by the continuity of $\phi,$ we have to 
$$\displaystyle \lim_{k \rightarrow \infty} \phi(\gamma^{(k)})=\phi\left(\lim_{k \rightarrow \infty}\gamma^{(k)}\right)=\phi (\lambda_1).$$
This allows us to prove the first statement.
\end{proof}

\vspace{0.2cm}

\begin{remark}
Suppose that the equality holds in (\ref{cota0}). Then all the inequalities in (\ref{Energia0}) must be considered as equalities. From the equality (\ref{eq7}), we get $e^{\lambda_2}=\ldots=e^{\lambda_n}=1,$ then $\lambda_2=\ldots={\lambda_n}=0$ implying that $\lambda_1=\gamma^{(k)}=0.$ Thus $G$ is isomorphic to the $\overline{K_n}.$
\end{remark}

\noindent The  following  result, a sharp increasing sequence of lower bounds for the Estrada index of a bipartite graph is obtained.\\

Considering (\ref{eq1}) and (\ref{eq7}), we obtain
	\begin{equation}\label{EE2}
	\begin{array}{lllll}
	EE(G) & = & e^{\lambda_1}+e^{-\lambda_1} + \displaystyle \sum_{k=2}^{n-1} e^{\lambda_k}  \\
	& \geq & 2\cosh{\lambda_1} + (n-2) + \displaystyle \sum_{k=2}^{n-1} \lambda_k \\
	& = &  2\cosh{\lambda_1} + (n-2) + M_1 + \lambda_1 - \lambda_1\\
	& = &   2\cosh{\lambda_1} +n-2.
	\end{array}
	\end{equation}	
	Define the function, $\Phi(x)= 2\cosh{x} + n-2.$
	%
	\noindent Note that, this is an increasing function on $D_{\Phi}=[0,+\infty)$.
\begin{theorem}\label{Tgb}
	Let $G$ be a bipartite connected graph of order $n$ with $n>2$. Considering
the sequence $\{\gamma^{(k)}\}_{k=0}^{\infty}$ as in (\ref{seq1}). Then  
			\begin{equation}\label{cota1}
		EE(G)\geq 2\cosh{\left(\gamma^{(k)}\right)}+n-2.		
		\end{equation}
		Equality holds 
		 if and only if G is isomorphic to the complete bipartite graph.
\end{theorem}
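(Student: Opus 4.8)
The plan is to mirror the structure of the proof of Theorem~\ref{Tg}, using the function $\Phi$ in place of $\phi$. First I would verify that $\Phi$ is well-defined and increasing on $D_\Phi = [0,+\infty)$: since $\cosh$ is increasing on $[0,+\infty)$, so is $\Phi$, and as in Theorem~\ref{Tg} we have $\gamma^{(k)} \in D_\Phi$ for all $k \geq 0$ because $\gamma^{(0)} = \sqrt{\sum_{i} d^2(i)/n} \geq \sqrt{2m/n} \geq 1 > 0$ (here we use that $G$ is connected with $n > 2$, so $m \geq 1$). Then, by Theorem~\ref{Lem}, $\lambda_1 \geq \gamma^{(k)}$ for every $k \geq 1$, and combining this with the monotonicity of $\Phi$ and the bound $EE(G) \geq 2\cosh(\lambda_1) + n - 2$ established in display~(\ref{EE2}) gives
$$ EE(G) \geq 2\cosh(\lambda_1) + n - 2 \geq 2\cosh\bigl(\gamma^{(k)}\bigr) + n - 2, $$
which is (\ref{cota1}). (The case $k = 0$ follows from the chain $\lambda_1 \geq \gamma^{(1)} \geq \gamma^{(0)}$ via Theorem~\ref{T1}, or directly from Theorem~\ref{Lem}'s companion inequality.) I would also remark, as in Theorem~\ref{Tg}, that $\{\Phi(\gamma^{(k)})\}_{k\geq 0}$ is increasing and converges to $\Phi(\lambda_1)$ by Theorem~\ref{T1} and the continuity of $\Phi$.

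The substantive part is the equality analysis. Suppose equality holds in (\ref{cota1}) for some $k$. Then every inequality in the chain above must be an equality. Equality in display~(\ref{EE2}) forces, via (\ref{eq7}), that $e^{\lambda_j} = 1$ for each $j = 2, \ldots, n-1$, i.e. $\lambda_2 = \cdots = \lambda_{n-1} = 0$; the spectrum is therefore $\{\lambda_1, 0^{(n-2)}, -\lambda_1\}$ (the symmetry $\lambda_n = -\lambda_1$ being exactly what was used in the first line of (\ref{EE2}), valid since $G$ is bipartite). Equality in $2\cosh(\lambda_1) \geq 2\cosh(\gamma^{(k)})$ forces $\lambda_1 = \gamma^{(k)}$, so by Theorem~\ref{Lem} the graph $G$ is pseudoregular or strictly pseudosemiregular. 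Now I would invoke the standard fact that a connected graph has exactly two distinct adjacency eigenvalues if and only if it is $K_n$, and exactly three distinct eigenvalues with spectrum of the form $\{\lambda_1, 0, -\lambda_1\}$ and $\lambda_1$ simple only for complete bipartite graphs $K_{p,q}$ (a connected bipartite graph whose only nonzero eigenvalues are $\pm\lambda_1$ must be complete bipartite — this is a classical result, see \cite{C-D-S1} or \cite{haemers}). Since $G$ is bipartite with $n > 2$ it is not $K_n$, so $G \cong K_{p,q}$ for some $p, q \geq 1$.

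Conversely, I would check that every complete bipartite graph $K_{p,q}$ attains equality. Its adjacency spectrum is $\{\sqrt{pq}, 0^{(p+q-2)}, -\sqrt{pq}\}$, so $EE(K_{p,q}) = 2\cosh(\sqrt{pq}) + (n-2)$ with $n = p+q$; and $K_{p,q}$ is $(p,q)$-semiregular, hence pseudosemiregular, so $\gamma^{(k)} = \lambda_1 = \sqrt{pq}$ for all $k$ by the equality condition of Theorem~\ref{Lem} (one can also compute the $\gamma^{(k)}$ directly from the degree sequences). Thus both sides of (\ref{cota1}) equal $2\cosh(\sqrt{pq}) + n - 2$, completing the equivalence.

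I expect the main obstacle to be pinning down the equality case cleanly: the inequality direction is essentially identical to Theorem~\ref{Tg}, but for equality one must (i) correctly exploit bipartiteness to pair $\lambda_1$ with $-\lambda_1$ before applying (\ref{eq7}) to the remaining $n-2$ eigenvalues, and (ii) deduce that a connected bipartite graph whose nonzero eigenvalues are precisely $\pm\lambda_1$ is complete bipartite — this last step relies on a spectral characterization that should be cited rather than reproved. A minor subtlety is confirming $\gamma^{(k)} = \sqrt{pq}$ for $K_{p,q}$ for \emph{all} $k$, including $k = 0$, which follows since $K_{p,q}$ is semiregular and every semiregular graph is pseudosemiregular, putting it in the equality class of Theorem~\ref{Lem}.
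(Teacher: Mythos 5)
Your proposal is correct and follows essentially the same route as the paper: the inequality comes from combining the bipartite estimate in display~(\ref{EE2}) with Theorem~\ref{Lem} and the monotonicity of $\Phi$, and the equality case reduces the spectrum to $\{\lambda_1, 0^{(n-2)}, -\lambda_1\}$. In fact you are more careful than the paper at the final step, where it merely asserts that this spectrum forces $G\cong K_{p,q}$; your explicit appeal to the classical characterization (a connected graph with exactly one positive eigenvalue is complete multipartite) and your verification of the converse direction for $K_{p,q}$ fill genuine gaps in the published argument.
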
		
	\begin{proof}

	Analogous to the proof of Theorem \ref{Tg}, $\gamma^{(k)} \in D_{\Phi}$. Since  $\{\gamma^{(k)}\}_{k=0}^{\infty}$ is an increasing sequence and by Theorem \ref{T1} converges to $\lambda_1.$ Then 
the continuity of $\phi$ allows us to prove the first statement.\\
		
		Moreover, we get
		$$EE(G)\geq 2\cosh{\left(\gamma^{(k)}\right)}+n-2.$$	
			Suppose now that the equality holds in (\ref{cota1}). Then all the inequalities in (\ref{EE2}) must be considered as equalities. From the equality (\ref{eq7}), we get $e^{\lambda_2}=\ldots=e^{\lambda_{n-1}}=1,$ then $\lambda_2=\ldots={\lambda_{n-1}}=0$ and $\lambda_1=-\lambda_n=\gamma^{(k)}$, which implies that $G$ is a bipartite complete graph, $K_{p,q}$ such that $p+q=n$ 
	\end{proof}
\newpage
For the other hand, recall  that  a  Hermitian complex  $\ell \times \ell$ matrix $R=  (r_{ij})$,  $r_{ij} \in \mathbb{C}$,  is  such  that $R=R^*$ where $R^{*}$ denotes  the  conjugate  transpose  of $R$.  For $x,  y \in \mathbb{C}^{\ell}$,  we  denote  by $<x,y>=x^{*}y$, the  inner  product  in $\mathbb{C}^{\ell},$ the norm, $\|x\|=\sqrt{<x,  x>}$ and $|R|=\sqrt{trace (R^*R)}$ the Frobenius norm  of $R$.

For  symmetric  matrices  it  is  possible  find  an  orthonormal  basis  of $C^{\ell}$, $x_1,  x_2,\ldots,x_{\ell}$,  of eigenvectors associated to  $\rho_1,  \rho_2,\ldots,\rho_{\ell}$ the  eigenvalues of $R$ 
where $Rx_i=\rho_ix_i$,  for $i=  1,2,\ldots\ell.$ The spectral radius of a square matrix $R$ is the largest absolute value of its eigenvalues, it is denoted by $|\rho_1|=  \max\{|\rho|:\rho_i \in \sigma(R)\}$. 
Let $f \in \mathbb{C}^{\ell}$ such  that $<f,x_i>\neq  0$  for $i=1,\ldots,\ell$.  We define the vector sequence 
\begin{equation}\label{seq2}
  \begin{array}{lllll}
        r^{(0)}=f \\
            \\
        r^{(1)}=Rr^{(0)}= Rf  \\
        \vdots \\
        r^{(k)}=Rr^{(k-1)}=R^{(k)}f
  \end{array}  
\end{equation}

In  view  of  the  fact  that  for $k=  0,1,\ldots,R^{(k)}f \neq  0$,  in \cite{C-G-T-R} the following numerical sequence was defined
\begin{equation}\label{seq3}
  \begin{array}{lllll}
        \xi^{(0)}=\displaystyle \frac{\|r^{(1)}\|}{\|r^{(0)}\|}=\frac{\|Rr^{(0)}\|}{\|f\|} \\
            \\
          \xi^{(1)}=\displaystyle \frac{\|r^{(2)}\|}{\|r^{(1)}\|}=\frac{\|Rr^{(1)}\|}{\|Rr^{(0)}\|}  \\
        \vdots \\
         \xi^{(k)}=\displaystyle \frac{\|r^{(k+1)}\|}{\|r^{(k}\|}=\frac{\|Rr^{(k)}\|}{\|Rr^{(k-1)}\|}. 
  \end{array}  
\end{equation}
Thereby, the authors in \cite{C-G-T-R},  we obtain the following result.
\begin{lemma}{\cite{C-G-T-R}}\label{crec}
Let $\{\xi^{(k)}\}_{k\geq 0}$ be the sequence defined in (\ref{seq3}), then $\{\xi^{(k)}\}_{k\geq 0}$ is an increasing sequence. Furthermore $$\displaystyle \lim_{k\rightarrow \infty}\xi^{(k)}=|\rho_1|.$$
\end{lemma}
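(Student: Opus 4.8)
The plan is to diagonalize $R$ in the given orthonormal eigenbasis and reduce the statement to an elementary inequality for ratios of weighted power sums. First I would write $f=\sum_{i=1}^{\ell}c_i x_i$ with $c_i=\langle x_i,f\rangle\neq 0$, so that $r^{(k)}=R^{k}f=\sum_{i=1}^{\ell}c_i\rho_i^{k}x_i$ and, since the $x_i$ are orthonormal and the $\rho_i$ are real, $\|r^{(k)}\|^{2}=\sum_{i=1}^{\ell}|c_i|^{2}\rho_i^{2k}$. Putting $a_i=|c_i|^{2}>0$ and $t_i=\rho_i^{2}\geq 0$, this yields
$$\bigl(\xi^{(k)}\bigr)^{2}=\frac{\|r^{(k+1)}\|^{2}}{\|r^{(k)}\|^{2}}=\frac{\sum_{i=1}^{\ell}a_i t_i^{\,k+1}}{\sum_{i=1}^{\ell}a_i t_i^{\,k}},\qquad k\geq 0.$$

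For the monotonicity I would show $\bigl(\xi^{(k+1)}\bigr)^{2}\geq\bigl(\xi^{(k)}\bigr)^{2}$, which, after clearing the positive denominators, is equivalent to
$$\Bigl(\sum_{i=1}^{\ell}a_i t_i^{\,k+2}\Bigr)\Bigl(\sum_{i=1}^{\ell}a_i t_i^{\,k}\Bigr)\;\geq\;\Bigl(\sum_{i=1}^{\ell}a_i t_i^{\,k+1}\Bigr)^{2}.$$
This is the Cauchy--Schwarz inequality for the vectors with $i$-th entries $\sqrt{a_i}\,|\rho_i|^{k+2}$ and $\sqrt{a_i}\,|\rho_i|^{k}$, whose inner product is $\sum_i a_i|\rho_i|^{2k+2}=\sum_i a_i t_i^{k+1}$; terms with $\rho_i=0$ are harmless. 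Hence $\{\xi^{(k)}\}_{k\geq 0}$ is nondecreasing.

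For the limit, set $\tau=\max_{1\leq i\leq\ell}t_i=|\rho_1|^{2}$ and $S=\{\,i:t_i=\tau\,\}\neq\emptyset$; note $\tau>0$ since, by hypothesis, $r^{(k)}\neq 0$ for all $k$, i.e.\ $R\neq 0$. Dividing the numerator and denominator of $\bigl(\xi^{(k)}\bigr)^{2}$ by $\tau^{k}$ gives
$$\bigl(\xi^{(k)}\bigr)^{2}=\frac{\tau\sum_{i\in S}a_i+\sum_{i\notin S}a_i t_i\,(t_i/\tau)^{k}}{\sum_{i\in S}a_i+\sum_{i\notin S}a_i\,(t_i/\tau)^{k}},$$
and since $(t_i/\tau)^{k}\to 0$ for $i\notin S$ while $\sum_{i\in S}a_i>0$ (each $a_i>0$ and $S\neq\emptyset$), the right-hand side tends to $\tau$. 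Therefore $\xi^{(k)}\to\sqrt{\tau}=|\rho_1|$.

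I do not expect a real obstacle here: once $R$ is diagonalized the claim collapses to a single application of Cauchy--Schwarz together with an elementary limit. The only points needing a little care are the use of $a_i>0$ (ensured by $\langle x_i,f\rangle\neq 0$) so that $\sum_{i\in S}a_i$ stays strictly positive, the harmless treatment of eigenvalues equal to $0$, and --- in the complex Hermitian case --- recalling that the eigenvalues are real, so that the cross terms in $\|r^{(k)}\|^{2}$ vanish by orthonormality and $\rho_i^{2k}=|\rho_i|^{2k}$.
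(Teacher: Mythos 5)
Your proof is correct: the eigenbasis expansion gives $\bigl(\xi^{(k)}\bigr)^{2}=\sum_i a_i t_i^{k+1}/\sum_i a_i t_i^{k}$, monotonicity is exactly the Cauchy--Schwarz inequality you state, and the limit computation via normalizing by $\tau^{k}=|\rho_1|^{2k}$ is sound because the hypothesis $\langle f,x_i\rangle\neq 0$ keeps $\sum_{i\in S}a_i>0$. The paper itself gives no proof of Lemma~\ref{crec} (it is imported from \cite{C-G-T-R}), but your argument is the standard power-method/Rayleigh-quotient proof used there, so there is nothing to fault; note only that ``increasing'' here must be read as nondecreasing, which is what you prove and is all that the paper's examples (e.g.\ $K_4$, where the sequence is constant) permit.
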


If the matrix $R$ is an irreducible nonnegative  matrix, then its spectral radius is simple with a positive eigenvector $x_1,$ see \cite{M}.  
Let $\textbf{e}$ be the $n$-dimensional all-one  vector $\textbf{e}=  (1,\ldots,1)^T$.  Evidently,  $<\textbf{e}, x_1>\neq  0$.

Considering the argument used in (\ref{Energia0}) and the inequality (\ref{eq7}), we have that
\begin{equation}\label{lbrho}
    EE(R) \geq e^{\rho_1} + \ell -1 +Tr(R)-\rho_1.
\end{equation} 
The equality holds if only if $\rho_2=\rho_3=\ldots=\rho_{\ell}=  0.$ 

Let 
\begin{equation}\label{varfi}
    \varphi(x)= e^{x} + \ell -1 +Tr(R)-x.
\end{equation}
an increasing function on $D_{\varphi}=[0,+\infty)$. Then, the following result is obtained.

\begin{theorem}\label{T3}
Let $R$ be a nonnegative  symmetric $\ell \times \ell $ 
matrix  with  spectral  radius $\rho_1$. Define the  sequence $\{\xi^{(k)}\}_{k\geq 0}$ as  in (\ref{seq3}) with $f$ replaced  by $\textbf{e}$.  Then $\xi^{(k)} \in D_{\varphi}$,  for  all $k=  0,1,2,...\ell,$ where $\varphi$ is  defined in (\ref{varfi}).
\end{theorem}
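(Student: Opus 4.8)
The plan is to reduce the statement to a single non‑vanishing fact and then settle that with the spectral decomposition of $R$. Observe first that, straight from the definition in (\ref{seq3}), each term $\xi^{(k)}=\|r^{(k+1)}\|/\|r^{(k)}\|$ is a quotient of (nonnegative) norms, so $\xi^{(k)}\ge 0$ automatically; the only obstruction to $\xi^{(k)}\in D_{\varphi}=[0,+\infty)$ would be a vanishing denominator. Hence it suffices to prove that $r^{(k)}=R^{k}\textbf{e}\neq 0$ for $k=0,1,\ldots,\ell$ (in fact the argument below works for every $k\ge 0$).

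For this I would expand $\textbf{e}$ in the orthonormal eigenbasis $x_1,\ldots,x_\ell$ of $R$ supplied in the Preliminaries: writing $\textbf{e}=\sum_{i=1}^{\ell}c_i x_i$ with $c_i=\langle x_i,\textbf{e}\rangle$, one gets $r^{(k)}=R^{k}\textbf{e}=\sum_{i=1}^{\ell}c_i\rho_i^{k}x_i$, whence by orthonormality
\[
\|r^{(k)}\|^{2}=\sum_{i=1}^{\ell}c_i^{2}\rho_i^{2k}\ \ge\ c_1^{2}\rho_1^{2k}.
\]
So the whole matter comes down to checking that $c_1\neq 0$ and $\rho_1>0$. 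The first is the observation already recorded just before the statement: since $R$ is nonnegative and symmetric, the eigenvector $x_1$ associated to the spectral radius $\rho_1$ may be taken nonnegative (Perron--Frobenius), and then, $\textbf{e}$ being a positive vector, $c_1=\langle x_1,\textbf{e}\rangle=\sum_{i=1}^{\ell}(x_1)_i>0$. For the second, if $\rho_1=0$ then $|\rho_i|\le\rho_1=0$ for all $i$, which forces every eigenvalue to vanish and, $R$ being symmetric and hence diagonalizable, $R=0$; ruling out this degenerate case gives $\rho_1>0$. Combining, $\|r^{(k)}\|^{2}\ge c_1^{2}\rho_1^{2k}>0$, so $r^{(k)}\neq 0$ and $\xi^{(k)}=\|r^{(k+1)}\|/\|r^{(k)}\|$ is a well-defined nonnegative real, that is, $\xi^{(k)}\in D_{\varphi}$.

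The one point needing care is exactly this exclusion of $R=0$ (equivalently $\rho_1=0$): that is the sole situation in which $\xi^{(k)}$ would be the indeterminate $0/0$ for $k\ge 1$, and apart from it the conclusion is immediate from the eigenexpansion together with $\langle\textbf{e},x_1\rangle\neq 0$. I would also point out that, in contrast with Lemma \ref{crec}, the argument never uses $\langle\textbf{e},x_i\rangle\neq 0$ for $i\ge 2$; that stronger condition is what is required to make $\{\xi^{(k)}\}$ increasing, but it is not needed for mere membership in $D_{\varphi}$.
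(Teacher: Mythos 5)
Your argument is correct, but it follows a genuinely different route from the paper's. You reduce the claim to its only nontrivial content --- that the quotients in (\ref{seq3}) are well defined, i.e.\ $R^{k}\textbf{e}\neq 0$ --- and settle this by expanding $\textbf{e}$ in the eigenbasis, using Perron--Frobenius to get $\langle x_1,\textbf{e}\rangle>0$ and excluding the degenerate case $R=0$ to get $\rho_1>0$. The paper never touches the eigenexpansion of $\textbf{e}$: it first asserts $0<\xi^{(k)}\leq|\rho_1|$ from the Rayleigh quotient, then shows that for a nonnegative symmetric matrix $\mathrm{trace}(R^2)\leq \textbf{e}^{*}R^{2}\textbf{e}=\|R\textbf{e}\|^{2}$, hence $\xi^{(0)}=\|R\textbf{e}\|/\sqrt{\ell}\geq |R|/\sqrt{\ell}\geq \rho_1/\sqrt{\ell}\geq 0$, and finally invokes the monotonicity of Lemma \ref{crec} to propagate this lower bound to every $\xi^{(k)}$. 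The paper's route buys a quantitative bound $\xi^{(k)}\geq |R|/\sqrt{\ell}$, which is stronger than bare membership in $D_{\varphi}=[0,+\infty)$; your route is leaner for the literal statement and, unlike the paper's (which takes $R^{k}\textbf{e}\neq 0$ and the strict positivity $0<\xi^{(k)}$ for granted), it actually isolates and proves the well-definedness that the theorem implicitly requires. Your closing remarks are also apt: the case $R=0$ (where $\xi^{(k)}$ is $0/0$ for $k\geq 1$) must indeed be excluded and the paper overlooks it, and membership in $D_{\varphi}$ needs only $\langle \textbf{e},x_1\rangle\neq 0$, not the full nondegeneracy hypothesis under which (\ref{seq3}) was introduced.
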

\begin{proof}
Before starting  the  proof,  it  is  worth  noting  that $$\rho_1 \leq \sqrt{\rho_1^2+\rho_2^2+...+\rho_{\ell}^2}=|R|.$$%
For  the  Rayleigh  quotient, we have that $0< \xi^{(k)} \leq |\rho_1|$ for all $k\geq 0$, then $0< \xi^{(k)}\leq |R|$. Since $R$ is  a symmetric nonnegative  matrix, $$trace(R^*R)=trace(R^2)\leq \textbf{e}^*R^2\textbf{e},$$ implying  that $|R| \leq \|R\textbf{e}\|$. Consequently,
\begin{equation}\label{e2}
    \displaystyle 0 \leq \frac{|R|}{\sqrt{\ell}} \leq \frac{\|R\textbf{e}\|}{\sqrt{\ell}}=\xi^{(0)}.
\end{equation}
where $\sqrt(\ell)$ is the norm of vector \textbf{e}, which is considered in (\ref{seq3}) for obtain the sequence of vectors $\xi^{(i)}$.
Finally,  by Lemma \ref{crec}, the  result  follows.
\end{proof}

The  following result is a generalize the Theorem \ref{Tg}.

\begin{theorem}
Let $R$ be  a  nonnegative  symmetric $\ell \times \ell $ 
matrix  with  spectral  radius $\rho_1$. Define the  sequence $\{\xi^{(k)}\}_{k\geq 0}$ as  in (\ref{seq3}) with $f$ replaced  by $\textbf{e}$. Then the sequence $\{\varphi(\xi^{(k)})\}_{k=0}^{\infty}$ is increasing and converges to $\varphi (\rho_1),$ 
moreover for all $ k\geq 0$  
\begin{equation}\label{ern}
    EE(R) \geq e^{\xi^{(k)}} + \ell -1 +Tr(R)- \xi^{(k)}.
\end{equation}
Equality holds in (\ref{ern}) if  and  only  if $\rho_2=\rho_3=\ldots=\rho_{\ell}=  0.$
\end{theorem}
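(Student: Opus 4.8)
The plan is to mirror almost verbatim the argument of Theorem \ref{Tg}, replacing the graph sequence $\{\gamma^{(k)}\}$ by the matrix sequence $\{\xi^{(k)}\}$ and the bound (\ref{cota0}) by (\ref{lbrho}). The three ingredients already assembled in the excerpt do all the work: (i) Theorem \ref{T3}, which guarantees $\xi^{(k)}\in D_{\varphi}=[0,+\infty)$ for every $k$, so that $\varphi(\xi^{(k)})$ is well defined; (ii) Lemma \ref{crec}, which says $\{\xi^{(k)}\}_{k\ge 0}$ is increasing with $\lim_{k\to\infty}\xi^{(k)}=|\rho_1|$; and (iii) the fact recorded after (\ref{varfi}) that $\varphi$ is increasing and continuous on $D_{\varphi}$.

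First I would note that since $R$ is nonnegative and symmetric, its spectral radius equals its largest eigenvalue, $|\rho_1|=\rho_1$ (this is the Perron--Frobenius fact invoked before (\ref{lbrho}); if $R$ is not irreducible one still has $\rho_1=|\rho_1|$ because the moments argument in (\ref{lbrho}) only used symmetry). Next, because $\varphi$ is monotone increasing on $D_{\varphi}$ and each $\xi^{(k)}\le\xi^{(k+1)}$ lies in $D_{\varphi}$ by Theorem \ref{T3}, the real sequence $\{\varphi(\xi^{(k)})\}_{k\ge 0}$ is increasing; and by continuity of $\varphi$ together with $\xi^{(k)}\to\rho_1$,
$$\lim_{k\to\infty}\varphi(\xi^{(k)})=\varphi\!\left(\lim_{k\to\infty}\xi^{(k)}\right)=\varphi(\rho_1).$$
Then I would invoke (\ref{lbrho}), i.e. $EE(R)\ge e^{\rho_1}+\ell-1+Tr(R)-\rho_1=\varphi(\rho_1)$; since $\varphi(\rho_1)\ge\varphi(\xi^{(k)})$ for every $k$ by monotonicity, inequality (\ref{ern}) follows at once:
$$EE(R)\ge\varphi(\xi^{(k)})=e^{\xi^{(k)}}+\ell-1+Tr(R)-\xi^{(k)},\qquad k\ge 0.$$

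For the equality characterization I would trace back through (\ref{lbrho}): equality in (\ref{ern}) forces equality in all the intermediate steps, in particular equality in (\ref{eq7}) applied to $e^{\rho_2},\dots,e^{\rho_\ell}$, which gives $e^{\rho_i}=1$, hence $\rho_2=\rho_3=\cdots=\rho_\ell=0$; this is exactly the stated condition, and it is already noted as the equality case of (\ref{lbrho}). Conversely, if $\rho_2=\cdots=\rho_\ell=0$ then $\xi^{(k)}$ must also equal $\rho_1$ for all $k$ (the Rayleigh-type quotients stabilize) and (\ref{ern}) becomes an identity. I do not foresee a serious obstacle: the only point needing a little care is confirming that for a possibly reducible nonnegative symmetric $R$ we still have $\xi^{(k)}\to\rho_1$ with $\rho_1\ge 0$, but this is covered by Theorem \ref{T3} and Lemma \ref{crec} as stated, so the proof is essentially a continuity-plus-monotonicity wrap-up.
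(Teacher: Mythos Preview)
Your plan is essentially identical to the paper's own argument: use Theorem \ref{T3} to place $\xi^{(k)}$ in $D_\varphi$, Lemma \ref{crec} for monotonicity and convergence of $\{\xi^{(k)}\}$, and then the continuity and monotonicity of $\varphi$ together with the already-established bound (\ref{lbrho}) to conclude. The paper's proof is in fact terser than yours---it does not spell out the equality discussion or the chain $EE(R)\ge\varphi(\rho_1)\ge\varphi(\xi^{(k)})$ explicitly---so your write-up is, if anything, more complete; the only small caution is that your converse claim ``$\xi^{(k)}=\rho_1$ for all $k$'' need not hold at $k=0$ (take e.g.\ $R=\mathrm{diag}(1,0)$), though it does hold for $k\ge 1$, and the paper itself does not address this point.
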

\begin{proof}
Since $\xi^{(k)} \in D_{\varphi}$, for all $ k\geq 0,$ see Theorem \ref{T3}.
Since  $\{\xi^{(k)}\}_{k=0}^{\infty}$ is an increasing sequence and converges to $\rho_1,$ by Lemma \ref{crec}. Then 
the continuity of $\varphi$ allows us to prove the statement.
\end{proof}

\section{Comparing bounds and final remarks}

In this section, we show some examples of the results obtained in the previous sections for several graphs, see Figure \ref{fig1}. We compare the estimates obtained in Theorem \ref{Tg} and Theorem \ref{Tgb} with the exact value of the Estrada Indice. \\

Consider the complete graph $K_{4},$ with eigenvalues $\{3,-1^{\left[ 3\right] }\}.$ Then $$EE(K_{4})=e^{3}+3e^{-1}=%
\allowbreak 21.\,\allowbreak 189.$$ For the other hand, we have:

\[
\begin{tabular}{lrr}
$d(1)=$ $d(2)=d(3)=d(4)=3$ \\ 
$d_{2}(1)=$ $d_{2}(2)=d_{2}(3)=d_{2}(4)=3^{2};$ \\ 
$d_{3}(1)=$ $d_{3}(2)=d_{3}(3)=d_{3}(4)=3^{3};$ \\ 
$d_{4}(1)=d_{4}(2)=d_{4}(3)=d_{4}(4)=3^{4}$%
\end{tabular}%
\]

So, we can observe that
\[
d_{k}(1)=d_{k}(2)=d_{k}(3)=d_{k}(4)=3^{k} 
\]%
Thereby, we obtain: 
\[
\gamma ^{(0)}=\gamma ^{(1)}=\gamma ^{(k)}=\gamma ^{(\infty )}=\lambda _{1}=3 
\]

Therefore, applying the Theorem \ref{Tg}
\[EE(G) > J^{k}=e^{\left( \gamma ^{(k)}\right) }+n-1-\gamma ^{(k)},\] we have
\[
J^{0}=J^{1}=\cdots =J^{k}=J^{\infty }\approx 20.\,\allowbreak
086<EE(K_{4})= 21.\,\allowbreak 189 
\]

\begin{figure}[h]\label{fig1}
\begin{eqnarray*}
\begin{tikzpicture}
    \tikzstyle{every node}=[draw,circle,fill=black,minimum size=4pt,
                            inner sep=0pt]
    \draw

        (-4,0) node (1) [label=below:] {}
        (-2,0) node (2) [label=below:] {}
        (-2,2) node (3) [label=below:] {}
        (-4,2) node (4) [label=below:] {}
        (0,0) node (5) [label=below:] {}
        (0,1) node (6) [label=left:] {}
        (0,2) node (7) [label=left:] {}
        (0,3) node (8) [label=left:] {}
        (2,0) node (9) [label=below:] {}
        (2,1) node (10) [label=below:] {}
        (2,2) node (11) [label=below:] {}
        (4,1.5) node (12) [label=below:] {}
        (4,0.5) node (13) [label=below:] {};

        \draw (1)--(2); \draw (2)--(3);\draw (3)--(4);\draw (4)--(1);\draw (3)--(1);\draw (4)--(2);\draw (5)--(6);\draw (6)--(7);\draw (7)--(8);\draw (9)--(13);\draw (9)--(12);\draw (10)--(13);\draw (10)--(12);\draw (11)--(13);\draw (11)--(12);
            \end{tikzpicture}
\end{eqnarray*}
\caption{Graphs  $K_{4},P_{4}$  and   $K_{2,3}.$}%
\end{figure}
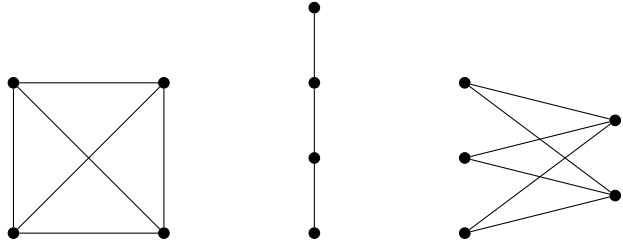

Consider the graph $P_{4},$ with eigenvalues $\{1.62,0.62,-0.62,-1.62\}.$ Then $$EE(P_{4})=2\cosh
(1.62)+2\cosh (0.62)=\allowbreak 7.\,\allowbreak 647\,9.$$ Note that, for $P_{4}$ we have

\[
\begin{tabular}{l}
$d(1)=1=d(4);$ $d(2)=2=d(3);$ \\ 
$d_{2}(1)=2=$ $d_{2}(4);$ $d_{2}(2)=3=d_{2}(3)$.%
\end{tabular}%
\]%
So, we can observe that 
\[
d_{k}(1)=d_{k}(4)=d_{k-1}(2)=d_{k-1}(3)\text{ \ }y\text{ \ }%
d_{k}(2)=d_{k}(3)=d_{k-1}(1)+d_{k-1}(3). 
\]%
Thereby, we obtain
\[
\gamma ^{(0)}=\sqrt{\frac{10}{4}}\approx 1.5811<\gamma ^{(1)}=\sqrt{\frac{26%
}{10}}\approx 1.6125<\cdots <\gamma ^{(k)}<\cdots <\gamma ^{(\infty
)}=\lambda _{1}\approx 1.62 
\]

Therefore, applying the Theorem \ref{Tgb}
\[ EE(G) > C^{k}= 2\cosh{\left(\gamma^{(k)}\right)}+n-2\] we have
\[
C^{0}\approx 6.\,\allowbreak 279\,2<C^{1}\approx 6.\,\allowbreak
402\,8<\cdots <C^{\infty }\approx 6.\,\allowbreak 433\,1<EE(P_{4}) =
\allowbreak 7.\,\allowbreak 647\,9 
\]

\bigskip 

Now, consider the complete bipartite graph $K_{2,3}$ with eigenvalues $\{\sqrt{6},0^{\left[ 3\right] },-\sqrt{6}\}.$ Then $$%
EE(K_{2,3})=2\cosh (\sqrt{6})+3=\allowbreak 14.\,\allowbreak 669\allowbreak .$$ Note that, for $K_{2,3}$ we have
\[
\begin{tabular}{lll}
$d(1)=d(2)=3$ ;$d(3)=d(4)=d(5)=2$ \\ 
$d_{2}(1)=$ $d_{2}(2)$ $=3\cdot 2;$ $\ d_{2}(3)=d_{2}(4)=d_{2}(4)=2\cdot 3$
\\ 
$\ \ \ \ \ \ \ \ \ \ \ \ \ \ \vdots $ \\ 
$d_{2k}(1)=d_{2k}(2)=2^{k}3^{k},$ $\ d_{2k}(3)=d_{2k}(4)=d_{2k}(5)=2^{k}3^{k}
$ \\ 
$d_{2k+1}(1)=d_{2k+1}(2)=2^{k}3^{k+1}$ , $%
d_{2k+1}(3)=d_{2k+1}(4)=d_{2k+1}(5)=2^{k+1}3^{k}$%
\end{tabular}%
\]

So, we can observe that

\[
\gamma ^{(0)}=\sqrt{\frac{30}{5}}=\sqrt{6}=\gamma ^{(1)}=\sqrt{\frac{180}{30}%
}=\sqrt{6}=...=\gamma ^{(k)}=\sqrt{6}=\lambda _{1}
\]

Therefore, applying the Theorem \ref{Tgb}
\[ EE(G) > C^{k}= 2\cosh{\left(\gamma^{(k)}\right)}+n-2\] we obtain

\[
C^{0}=C^{1}=C^{k}=C^{\infty }\approx 14.\,\allowbreak
669=EE(K_{2,3})= 14.\,\allowbreak 669
\]

Analyzing the above examples, we observe the following:
\begin{itemize}
\item The results obtained for symmetric matrices, in particular Theorems \ref{Tg} and Theorem \ref{Tgb} for graphs, the examples show that the increasing succession of lower bounds obtained for the Estrada index approximate in an optimal way and in the pertinent cases equality is reached.
\item
For future work, considering the technique exhibited demonstration, obtain a succession of upper bounds that delimit index of Estrada.
\item
Extend the results to other matrices associated with a graph, such as: Laplacian and signless Laplacian Matrix, among others.
\item
Study the Estrada index in relation to the evolving graph. Consider other possible combinatorial relationships to limit the Estrada index by other spectral parameters. (See \cite{Shang})
\end{itemize}

\newpage
\section*{References}


\begin{thebibliography}{99}                                         

\bibitem{J-J-J} J. Aguayo, J. Carmona, J. Rodriguez, New lower bounds for the Estrada index, arXiv:2426343, 2018.



\bibitem{C-D-S1} D. Cvetkovi\'c, M. Doob, H. Sachs, Spectra of Graphs – Theory and Application, Academic Press, New York, 1980.




\bibitem{C-D-S2} D. Cvetkovi\'c, P. Rowlinson, S. Simi\'c, An Introduction to the Theory of Graph Spectra, Cambridge Univ.  Press, Cambridge, 2010.

\bibitem{C-G-T-R} J. Carmona, I. Gutman, N.J. Tamblay, M. Robbiano, A decreasing sequence of upper bounds for the Laplacian energy of a tree. Linear Algebra Appl. 446 (2014) 304-313



\bibitem{D-S-E-K-F-N}A. Dilek Gungor, A. Sinan Cevik, Eylem G. Karpuz, Firat Ates and I. Naci Cangul, Generalization for Estrada index,AIP Conference Proceedings 1281, 1106 (2010).

\bibitem{E1} E. Estrada, Characterization of 3D molecular structure, Chem. Phys. Lett. 319
(2000) 713-718.

\bibitem{E2} E. Estrada, Characterization of the folding degree of proteins, Bioinformatics 18 (2002) 697-704.

\bibitem{E3} E. Estrada, Characterization of the amino acid contribution to the folding degree of proteins, Proteins 54 (2004) 727-737.

\bibitem{E4} E. Estrada, J.A. Rodr\'iguez-Vel\'azquez, Subgraph centrality in complex networks, Phys. Rev. E 71 (2005) 056-103.

\bibitem{E5} E. Estrada, J.A. Rodr\'iguez-Vel\'azquez, Spectral measures of bipartivity in com- plex networks, Phys. Rev. E 72 (2005) 046-105.


\bibitem{D-G} A. Dress and I. Gutman, On the number of walks in a graph, Appl. Math. Lett., 16
(2003) 797-801.


\bibitem{F-A-G} G.H. Fath-Tabar, A.R. Ashrafi, I. Gutman, Note on Estrada and L-Estrada indices of graphs, Bull. Cl. Sci. Math. Nat. Sci. Math. 34 (2009) 1-16.




\bibitem{G-D-R} I. Gutman, H. Deng, S. Radenkovi\'c, The Estrada index: an updated survey, In: D. Cvetkovi\'c, I. Gutman (Eds.), Selected Topics on Applications of Graph Spectra, Math. Inst., Beograd, 2011, 155-174.

\bibitem{G-F-G-M-V} I. Gutman, B. Furtula, B. Glisic, V. Markovic, A. Vesel, Estrada index of acyclic molecules, Indian J. Chem., 46A (2007) 723-728.


\bibitem{G-R-F-M-S} I. Gutman, S. Radenkovi\'c, B. Furtula, T. Mansour, M. Schork, Relating Estrada index with spectral radius, J. Serb. Chem. Soc. 72 (2007) 1321-1327.

\bibitem{G-R} I. Gutman, S. Radenkovi\'c, A lower bound for the Estrada index of bipartite molecular graphs, Kragujevac J. Sci. 29 (2007) 67-72.

\bibitem{H-T-W} Y. Hou, Z. Tang, C. Woo, On the spectral radius, k-degree and the upper bound of energy in a
graph, MATCH Commun. Math.  Comput. Chem. 57 (2007) 341-350.


\bibitem{G-G-M-S} Y. Ginosar, I. Gutman, T. Mansour, M. Schork, Estrada index and Chbeyshev polynomials, Chem. Phys. Lett. 454 (2008) 145-147.

\bibitem{haemers} W. Haemers, A. E. Brouwer. Spectra of graphs.
Springer-Verlag 2012.

\bibitem{K} A. Khosravanirad, A lower bound for Laplacian Estrada index of a graph, MATCH Commun. Math. Comput. Chem. 70 (2013) 175-180.


\bibitem{M} H. Minc,  Nonnegative  Matrices,  Wiley,  New  York,  1988.


%

\bibitem{R} Rao Li, Two lower bound of the Estrada index of a Hermitian matrix. Advances and Applications in Mathematical Sciences, 12 (2012) 1-70.

\bibitem{S1} Y. Shang, Biased edge failure in scale-free networks based on natural connectivity, Indian J. Phys. 86 (2012) 485-488.

\bibitem{S2} Y. Shang, Random lifts of graphs: network robustness based on the Estrada index, Appl. Math. E-Notes 12 (2012) 53-61.

\bibitem{S3} Y. Shang, The Estrada index of random graphs, Sci. Magna 7 (2011) 79-81.

\bibitem{S4} Y. Shang, Local natural connectivity in complex networks, Chin. Phys. Lett. 28 (2011).

\bibitem{S5} Y. Shang, Lower bounds for the Estrada index of graphs, Electron. J. Linear Algebra 23 (2012) 664-668.

\bibitem{Shang} Y. Shang, Laplacian Estrada and Normalized Laplacian
Estrada Indices of Evolving Graphs, PLOS ONE, 2015. 


\bibitem{Y-L-T} A. Yu, M. Lu, F. Tian, On the spectral radius of graphs, Lin. Algbera Appl. 387.

\bibitem{Z} B. Zhou, On Estrada index, MATCH Commun. Math. Comput. Chem. 60 (2008) 485-492.

\bibitem{Z-Z} H. Zhou, Q. Zhou, Laplacian Estrada index of circulant graphs, J. Shaoyang U. 9 (2013) 15-18.
\end{thebibliography}
\end{document}